\documentclass[11pt]{amsart}

\usepackage[T1]{fontenc}
\usepackage{lmodern}
\usepackage{amsmath,amssymb,amsthm,mathtools}
\usepackage{booktabs}
\usepackage{graphicx}
\usepackage[margin=1in]{geometry}
\usepackage{microtype}
\usepackage[hidelinks]{hyperref}

\newtheorem{theorem}{Theorem}[section]
\newtheorem{proposition}[theorem]{Proposition}
\newtheorem{lemma}[theorem]{Lemma}

\theoremstyle{definition}
\newtheorem{definition}[theorem]{Definition}
\theoremstyle{remark}

\newcommand{\Z}{\mathbb{Z}}
\newcommand{\odim}{\operatorname{odim}}

\newcommand{\mset}[1]{\{\!\{#1\}\!\}}
\newcommand{\eps}{\varepsilon}

\title{The Outer Multiset Dimension of Toroidal Grids}
\author[B. Peng]{
  Bo Peng \\ \vspace{2pt}
  \textmd{\small Institute of Mathematical Sciences, ShanghaiTech University}
}
\date{September 16, 2026}

\hypersetup{
  pdftitle={The Outer Multiset Dimension of Toroidal Grids},
  pdfauthor={Bo Peng}
}

\begin{document}

\begin{abstract}
Let $S$ be a set of vertices in a connected graph $G$.  A vertex outside
$S$ is represented by the multiset of its distances to the vertices of $S$.
The outer multiset dimension $\odim(G)$ is the minimum cardinality of an $S$
for which these representations distinguish all vertices outside $S$.  We
determine $\odim(C_s\mathbin{\square}C_t)$ for all $s,t\geq 3$, answering a
problem of Klav\v{z}ar, Kuziak, and Yero.  The values range from $3$ to $8$.
The proof combines a half-turn argument giving a universal four-landmark
lower bound when both factors have length at least four, explicit three- and
four-landmark constructions for the infinite families, and exact finite
enumeration on the remaining strip.  The collision classification behind the
infinite four-landmark construction is certified by exact quantifier
elimination in linear integer arithmetic; source code and all finite upper
certificates accompany the paper.
\end{abstract}

\maketitle

\section{Introduction}

Let $G$ be a finite connected graph and let $S=\{w_1,\ldots,w_k\}$ be a set
of vertices.  The usual metric representation of a vertex records the
distances to the landmarks in a fixed order.  If the landmark identities are
discarded, the resulting representation is the multiset
\[
  m_G(x\mid S)=\mset{d_G(x,w):w\in S}.
\]
Gil-Pons, Ram\'{\i}rez-Cruz, Trujillo-Rasua, and Yero introduced the outer
multiset dimension in order to require these representations to distinguish
only the vertices outside $S$ \cite{GilPonsEtAl2019}.  This avoids the
nonexistence phenomenon that occurs for the non-outer multiset dimension.

Klav\v{z}ar, Kuziak, and Yero determined the outer multiset dimension of
Cartesian products of two paths and asked for the corresponding value on a
Cartesian product of two cycles \cite[Problem~6.4]{KlavzarKuziakYero2023}.
The question is repeated as an open problem in the recent survey of Farhan,
Klav\v{z}ar, Kuziak, and Yero \cite[Problem~11]{FarhanEtAl2026}.  We solve it
for every pair of cycle lengths.

Write
\[
  a=\min\{s,t\},\qquad b=\max\{s,t\}.
\]
The complete answer is the following.

\begin{theorem}\label{thm:main}
For all integers $s,t\geq 3$, the outer multiset dimension of
$C_s\mathbin{\square}C_t$ is as follows.

If $a=3$, then
\[
\odim(C_a\mathbin{\square}C_b)=
\begin{cases}
8,&b=3,\\
7,&b=5,\\
5,&b\in\{4,6,7,9,11\},\\
4,&b\in\{8,10\}\text{ or }b\geq13\text{ is odd},\\
3,&b\geq12\text{ is even}.
\end{cases}
\]

Suppose that $a\geq4$.  The value is $7$ at $(a,b)=(4,4)$ and is $6$ at
$(a,b)\in\{(5,5),(6,6)\}$.  It is $4$ precisely in the following cases:
\begin{enumerate}
  \item $b\in\{15,16\}$ or $b\geq18$;
  \item $b=17$ and $a\in\{4,5,6,8,10,12,14,15,16\}$;
  \item $(a,b)=(5,10)$;
  \item $b=12$ and $a\in\{4,5,6,8,10,12\}$;
  \item $b=14$ and $a\in\{4,5,6,8,10,12,14\}$.
\end{enumerate}
In every remaining case with $a\geq4$, the value is $5$.
\end{theorem}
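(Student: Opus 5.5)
The proof splits into lower bounds and upper bounds, organized as in the abstract.

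\textbf{Lower bounds.} Label $V(C_s\mathbin{\square}C_t)=\Z_s\times\Z_t$ with $d((x,y),(x',y'))=d_{C_s}(x,x')+d_{C_t}(y,y')$. The key tool is a half-turn argument. Given landmarks $w_1,\ldots,w_k$, choose a centre $c$ in the half-integer lattice and let $\rho$ be the point reflection $v\mapsto 2c-v$, an automorphism of the torus. If $\rho$ maps $S$ to itself, then $m(v\mid S)=m(\rho v\mid S)$. The outer vertices $v$ with $\rho v\neq v$ and $v,\rho v\notin S$ then collide, so resolving sets cannot be $\rho$-invariant.

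For $k\le 3$ with $a,b\ge 4$, I would show a weaker property suffices. Take $\rho$ to be the half-turn about the midpoint of two landmarks $w_1,w_2$, so that it swaps them. Then compare $m(v)$ and $m(\rho v)$ along a suitable line of vertices $v$ chosen so that $d(v,w_3)=d(\rho v,w_3)$. This is a perpendicular-bisector-type set for $w_3$ and $\rho w_3$, and in the $\ell_1$ torus metric it contains a long segment. Counting such $v$ against the at most three landmarks shows a collision survives whenever $a,b\ge 4$. This gives the universal bound $\odim\ge 4$.

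The higher lower bounds (values $5,6,7,8$ on specific families, and the $a=3$ cases) come from exhaustive enumeration. Only finitely many pairs are involved, since the theorem says the answer is $4$ or $3$ for large $b$. To make the enumeration feasible, I would reduce landmark sets modulo the automorphism group (translations, reflections, and swapping coordinates when $s=t$). For $a=3$, a separate small argument shows $\odim\ge 4$ for odd $b$ and $\odim\ge 3$ generally: two landmarks have few possible multisets against $3b-2$ outer vertices. The odd/even parity distinction for $b$ with $a=3$ would come from the half-turn centre being forced to a lattice point.

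\textbf{Upper bounds.} I would give explicit parametric constructions for the infinite families.
\begin{itemize}
\item For $a=3$ and even $b\ge 12$, use three landmarks.
\item For $a=3$ and odd $b\ge 13$, use four.
\item For $a\ge 4$ and $b\ge 18$, use four landmarks placed asymmetrically along the long cycle, for instance at offsets roughly $0$, $\lfloor b/4\rfloor$, and so on, with one landmark shifted in the short direction to break the half-turn symmetry.
\end{itemize}
Verifying such a construction means showing $m(u)\neq m(v)$ for all outer $u\neq v$. Each distance is piecewise linear in the coordinates and in the parameters $a,b$, so equality of multisets is a disjunction over permutations of linear-arithmetic constraints. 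I would classify all collisions by Presburger quantifier elimination, uniformly in $a$ and $b$ above thresholds, and conclude that none exist. The remaining finite strip ($b\le 17$ and the exceptional pairs) is settled by computer-found explicit sets, checked directly.

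\textbf{Main obstacle.} The hardest step is the infinite four-landmark construction for $a\ge 4$. The uniform collision classification must handle all wrap-around regimes of the torus metric, and it must also hold for small $a$ with large $b$. I would first try to split by the sign pattern of the coordinate differences, reducing each regime to a linear system checked by hand or by quantifier elimination. Separately, making the exact boundary of the finite enumeration (for example $b=17$ with its irregular list of $a$) match the theorem is purely computational, but it requires trusting exhaustive search.
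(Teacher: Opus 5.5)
Your architecture matches the paper's: a half-turn lower bound, explicit four-sets certified in linear integer arithmetic, and exhaustive search on a finite strip. However, the lower-bound step fails as you state it.

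\textbf{The half-turn lower bound.} You fix the half-turn $\rho$ swapping $w_1,w_2$ and assert that the bisector of $w_3$ and $\rho w_3$ contains a long segment. Translate $w_3$ to $0$ and write $w_1+w_2-2w_3=(\xi,\zeta)$ in $\Z_n\times\Z_m$ with $n$ odd and $m$ even. The bisector is $\{(u,v):F(n,\xi,u)+F(m,\zeta,v)=0\}$, where $F(q,c,u)=\delta_q(u)-\delta_q(u-c)$. Two cases break your claim.
\begin{itemize}
\item If $\xi=0$ and $\delta_m(\zeta)$ is odd, the bisector is empty, because every value of $F(m,\zeta,\cdot)$ is odd. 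For instance, in $\Z_5\times\Z_4$ with $w_3=(0,0)$, $w_1=(1,0)$, $w_2=(4,1)$, you get $\rho w_3=(0,1)$, which is adjacent to $w_3$, and no vertex is equidistant from the two.
\item If $\delta_n(\xi)=1$ and $\delta_m(\zeta)>0$ is even, every solution is fixed by $\rho$.
\end{itemize}
You also need two nonfixed orbits, since one of them may be $\{w_1,w_2\}$. With a factor $C_4$, a good displacement can give only one.

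The missing idea is to let each landmark in turn play the role of $w_3$. The three displacements $w_j+w_k-2w_i$ sum to zero. The bad cases force the odd-coordinate component into $\{0,\pm1\}$, and a parity count in the even coordinate then shows that all three cannot be bad when $n\geq5$. The $C_4$ case still needs an explicit change of the swapped pair. When both factors are even, the paper simply uses $\dim\leq\odim$ with $\dim=4$.

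The same problem appears for $C_3\square C_n$ with $n$ odd. There the bisector argument guarantees only one nonfixed orbit, and it can be $\{w_1,w_2\}$ itself whenever $d(w_1,w_3)=d(w_2,w_3)$. So equilateral triples need a separate direct argument, and your remark that the centre is forced to a lattice point does not supply one.

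Your counting proof of $\odim\geq3$ for $a=3$ is also wrong for large $b$: two landmarks allow on the order of $b^2$ multisets against only $3b-2$ outer vertices. Use $\dim(C_3\square C_b)=3$, or the half-turn swapping the two landmarks, instead.

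\textbf{The upper bounds.} You have not said which sets to use, and that is where the content lies. The paper puts three landmarks in one row and one in the adjacent row, writing $t=b$ for the long factor:
\begin{itemize}
\item $\{(0,0),(0,4),(0,m),(1,m+1)\}$ for $t=2m+1\geq19$;
\item $\{(0,0),(0,4),(0,m+1),(1,m-1)\}$ for $t=2m\geq20$;
\item separate sets for $t=15,16,18$, and the three-set $\{(0,0),(0,4),(1,0)\}$ for $C_3\square C_{2m}$, $m\geq6$.
\end{itemize}
With these sets, the multiset of $(i,j)$ is $\delta_s(i)$ plus a one-dimensional code depending only on $j$ and $\eps=\delta_s(i-1)-\delta_s(i)\in\{-1,0,1\}$. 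The attainable pairs $(\delta_s(i),\eps)$ are listed explicitly for every $s$. This removes the short factor entirely, including $s=3$, so the Presburger step involves only $m$ and the column indices. The normalized codes do collide, but each collision is ruled out because it cannot lift to equal unnormalized multisets for any admissible row states.

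Your plan of eliminating over both coordinates of two vertices, with both cycle lengths as parameters, is possible in principle but much heavier. Without a concrete set (``offsets roughly $0,\lfloor b/4\rfloor$'' is not one), the uniformity in small $a$ that you identify as the main obstacle remains unaddressed.
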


Several features of the formula are worth noting.  Apart from three small
square exceptions, every torus with both factors of length at least four has
outer multiset dimension four or five.  Once the larger factor is at least
18, the value stabilizes at four.  The thin family
$C_3\mathbin{\square}C_t$ behaves differently: its value is three for every
even $t\geq12$ and four for every odd $t\geq13$.

The proof has three parts.  First, half-turns in the abelian group
$\Z_s\times\Z_t$ turn bisectors into pairs of vertices with the same
unordered distance representation.  This yields a four-landmark lower bound
for $s,t\geq4$.  Second, explicit four-sets resolve every sufficiently long
torus.  Their verification reduces to the collision structure of a
one-dimensional cyclic code.  Completeness of this collision classification
is an exact computer-assisted step over linear integer arithmetic.  Third,
the remaining finite strip is settled by exhaustive enumeration with
independently checked upper certificates.

The source accompanying this paper contains the exact affine certificate, the
finite enumerator, and all retained witnesses.  Section~\ref{sec:computation}
states precisely which conclusions use computation.

\section{Definitions and cyclic difference functions}

For $n\geq3$, identify $V(C_n)$ with $\Z_n$ and put
\[
  \delta_n(x)=\min\{\bar x,n-\bar x\},
  \qquad 0\leq\bar x<n,
\]
where $\bar x$ is the canonical representative of $x$.  Thus the distance on
the torus $T_{s,t}=C_s\mathbin{\square}C_t$ is
\begin{equation}\label{eq:torus-distance}
  d((i,j),(i',j'))=\delta_s(i-i')+\delta_t(j-j').
\end{equation}

\begin{definition}
A set $S\subseteq V(G)$ is an \emph{outer multiset resolving set} if
$m_G(x\mid S)\neq m_G(y\mid S)$ for all distinct
$x,y\in V(G)\setminus S$.  The minimum size of such a set is denoted
$\odim(G)$.
\end{definition}

Every outer multiset resolving set is an ordinary resolving set: equality of
the ordered distance vectors implies equality of the corresponding multisets,
and a landmark is distinguished from every nonlandmark by its zero coordinate.
Consequently,
\begin{equation}\label{eq:metric-lower}
  \dim(G)\leq\odim(G).
\end{equation}
We shall use the result of C\'{a}ceres et al. that
\begin{equation}\label{eq:ordinary-torus}
\dim(C_s\mathbin{\square}C_t)=
\begin{cases}
3,&\text{at least one of $s,t$ is odd},\\
4,&\text{both $s,t$ are even}
\end{cases}
\end{equation}
for $s,t\geq3$ \cite[Theorem~8.4]{CaceresEtAl2007}.

The following elementary difference function controls all bisectors used
below:
\[
  F(q,c,u)=\delta_q(u)-\delta_q(u-c),
  \qquad r=\delta_q(c).
\]

\begin{lemma}[cyclic difference table]\label{lem:difference-table}
The following statements hold.
\begin{enumerate}
  \item If $q$ is odd, the value set of $F(q,c,\cdot)$ is every integer in
  $[-r,r]$.  If $r>0$, zero occurs once, every value $k$ with
  $0<|k|<r$ occurs once, and each of $\pm r$ occurs
  $(q+1)/2-r$ times.
  \item If $q$ is even, the value set is
  $\{-r,-r+2,\ldots,r\}$.  If $r>0$, every nonendpoint value occurs twice,
  while each endpoint occurs $q/2-r+1$ times.  Zero occurs twice if $r$ is
  even and not at all if $r$ is odd.
  \item If $r=0$, the function is identically zero.
\end{enumerate}
In every case $F(q,c,c-u)=-F(q,c,u)$.
\end{lemma}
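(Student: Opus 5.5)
We prove the antisymmetry first, since it is used below. The identity $F(q,c,c-u)=-F(q,c,u)$ follows from $\delta_q(-x)=\delta_q(x)$:
\[
F(q,c,c-u)=\delta_q(c-u)-\delta_q(-u)=\delta_q(u-c)-\delta_q(u)=-F(q,c,u).
\]
Part (3) is equally immediate. If $r=0$ then $c\equiv0$, so $\delta_q(u)=\delta_q(u-c)$ for every $u$.

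For parts (1) and (2) the plan is to normalize and then compute the function explicitly. Reflection $u\mapsto -u$ sends $F(q,c,u)$ to $\delta_q(u)-\delta_q(u+c)$. Substituting $u\mapsto u'+c$ turns this into $\delta_q(u'+c)-\delta_q(u')=-F(q,c,u'+c)$. Hence the multiset of values of $F(q,-c,\cdot)$ is the negation of that of $F(q,c,\cdot)$. Both descriptions in the statement are symmetric under negation (by the antisymmetry just proved, or directly by inspection). So we may assume $0<c=r\leq\lfloor q/2\rfloor$. We then view $\delta_q$ as the tent function on $\{0,\dots,q-1\}$, which increases with slope $1$ up to $\lfloor q/2\rfloor$ and then decreases with slope $1$.

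Next we compute $g(u)=\delta_q(u)-\delta_q(u-r)$ by walking $u$ once around the cycle. A unit step in $u$ changes $g$ by $\epsilon(u)-\epsilon(u-r)$, where $\epsilon(x)=\delta_q(x+1)-\delta_q(x)\in\{+1,-1,0\}$. The value $0$ occurs only at the peak when $q$ is odd. Thus $g$ is piecewise linear around the cycle with four pieces:
\begin{enumerate}
\item a plateau at $+r$ when $u$ and $u-r$ both lie on opposite slopes near $u\approx q/4+r/2$;
\item a descending ramp of slope $-2$ near the antipode;
\item a plateau at $-r$;
\item an ascending ramp of slope $+2$ near $0$.
\end{enumerate}
Concretely, for $u\in[0,r]$ we have $g(u)=u-(r-u)=2u-r$. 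This ramp runs from $-r$ to $r$ in steps of $2$.

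When $q$ is even, the descending ramp is the mirror image $u\in[q/2,q/2+r]$ with $g=r-2(u-q/2)$. The remaining $u$ give plateaus. On $[r,q/2]$ we get $g=r$, which is $q/2-r+1$ points counting the ramp endpoint. Each interior ramp value is hit once on each ramp, giving two occurrences, and the endpoint counts follow by adding the plateau lengths. Parity gives the value set $\{-r,-r+2,\dots,r\}$, with zero present exactly when $r$ is even.

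When $q$ is odd, the peak has a flat step, so the descending part becomes two half-ramps offset by one. On one side of the antipode $g$ takes the values $r-1,r-3,\dots$, and on the other side $r-2,r-4,\dots$, down to $-r$. Together with the ascending ramp of odd steps, this yields every integer in $[-r,r]$. Zero and each $k$ with $0<|k|<r$ then appear exactly once. The endpoints absorb the rest: subtracting the $2r-1$ interior values from $q$ leaves $q-2r+1$ points, and antisymmetry splits these equally, giving $(q+1)/2-r$ each.

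The main obstacle I expect is the bookkeeping at the breakpoints in the odd case, namely where the flat step at the peak sits relative to the shifted copy, and making sure the endpoint counts are not off by one. I would verify the final counts by the total-count check $2r-1+2((q+1)/2-r)=q$ in the odd case, and $2(r-1)+2(q/2-r+1)=q$ in the even case.
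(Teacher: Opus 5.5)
Your overall plan matches the paper's: normalize to $c=r$, compute $F$ on explicit intervals, and read off values and multiplicities, using the involution $u\mapsto c-u$ for the sign symmetry. The antisymmetry, part (3), and the even case are fine. The odd case, however, is computed incorrectly, and that is where the real content of part (1) lies.

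For $q=2\ell+1$ and $\ell+1\le u\le \ell+r$ one has $\delta_q(u)=q-u$ and $\delta_q(u-r)=u-r$, so $g(u)=q+r-2u$. The descending part is therefore a \emph{single} ramp of slope $-2$, taking each of the values $r-1,r-3,\dots,1-r$ once. The flat step at the peak only produces two unit drops: $r\to r-1$ on entering the ramp at $u=\ell+1$, and $1-r\to -r$ on leaving it at $u=\ell+r+1$. It does not split the ramp into two half-ramps covering both parities. The values $r-2,r-4,\dots$ come only from the ascending ramp $2u-r$ on $[0,r]$.

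Under your description, every $k$ with $|k|<r$ and $k\equiv r \pmod 2$ would be hit once on the ascending ramp and again on the descending part. So your conclusion that each $k$ with $0<|k|<r$ appears exactly once does not follow from what you wrote; it contradicts it. Your endpoint count, which subtracts $2r-1$ singly counted interior values from $q$, depends on that conclusion.

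The missing point is the parity interlacing that the paper states explicitly. For odd $q$ the two ramps are arithmetic progressions of opposite parity, so together they hit each integer in $(-r,r)$ exactly once. With the correct descending ramp in place, your counting argument goes through: the remaining $q-2r+1$ points have values $\pm r$, and $u\mapsto r-u$ splits them evenly.

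There are also two smaller slips. The plateau at $+r$ on $[r,\lfloor q/2\rfloor]$ occurs when $u$ and $u-r$ lie on the \emph{same} ascending slope, not opposite ones. In the normalization, $\delta_q(u'+c)-\delta_q(u')$ equals $+F(q,c,u'+c)$, so your displayed identity has a sign error. It is simpler to note that $F(q,-c,u)=F(q,c,-u)$, so the value multisets of $F(q,\pm c,\cdot)$ coincide outright.
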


\begin{proof}
By reflecting the cycle, take $c=r$.  The involution $u\mapsto c-u$ pairs
the value $k$ with $-k$.  Along each of the two arcs joining $0$ to $c$, the
difference changes by two at each step until it reaches an endpoint plateau.
For odd $q$ the two resulting arithmetic progressions have opposite parity
and interlace, whereas for even $q$ they have the same parity and repeat the
interior values.  The lengths of the two plateaus give the endpoint
multiplicities below.

If $q=2\ell+1$ and $r>0$, then on
the four successive integer intervals the values are
\[
\begin{array}{c|c}
0\leq u\leq r-1&2u-r\\
r\leq u\leq\ell&r\\
\ell+1\leq u\leq\ell+r&2\ell+1+r-2u\\
\ell+r+1\leq u\leq2\ell&-r.
\end{array}
\]
For $q=2\ell$, the corresponding table is
\[
\begin{array}{c|c}
0\leq u\leq r-1&2u-r\\
r\leq u\leq\ell&r\\
\ell+1\leq u\leq\min\{\ell+r,2\ell-1\}&2\ell+r-2u\\
\ell+r+1\leq u\leq2\ell-1&-r.
\end{array}
\]
Empty intervals are omitted.  Reading the values and their multiplicities
from these tables proves the result.
\end{proof}

\section{The factor of length three}\label{sec:c3}

We begin with the infinite family where three landmarks suffice.

\begin{proposition}\label{prop:c3-even}
For every $m\geq6$,
\[
  \odim(C_3\mathbin{\square}C_{2m})=3.
\]
The set
\[
  S=\{(0,0),(0,4),(1,0)\}
\]
is an outer multiset basis.
\end{proposition}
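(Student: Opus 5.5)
The lower bound comes from \eqref{eq:metric-lower} and \eqref{eq:ordinary-torus}. Since $3$ is odd, $\dim(C_3\mathbin{\square}C_{2m})=3$, and therefore $\odim\geq3$. It remains to show that $S=\{(0,0),(0,4),(1,0)\}$ is an outer multiset resolving set for every $m\geq6$.

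For a vertex $x=(i,j)$ with $i\in\Z_3$ and $j\in\Z_{2m}$, write $D=\delta_{2m}(j)$ and $D'=\delta_{2m}(j-4)$. By \eqref{eq:torus-distance}, the three distances are
\[
d_1=\delta_3(i)+D,\qquad d_2=\delta_3(i)+D',\qquad d_3=\delta_3(i-1)+D.
\]
Two facts organize the whole argument.

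\emph{First}, $d_2-d_1=D'-D=-F(2m,4,j)$. Since $q=2m$ is even, Lemma~\ref{lem:difference-table} shows this difference lies in $\{-4,-2,0,2,4\}$. In particular $d_1$ and $d_2$ always have the same parity.

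\emph{Second}, $d_3-d_1=\delta_3(i-1)-\delta_3(i)$, which equals $+1$, $-1$ or $0$ according as $i=0,1,2$.

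Hence the parity pattern of the multiset recovers the row class. If all three entries have the same parity, then $i=2$. Otherwise $i\in\{0,1\}$, and the unique entry of odd-one-out parity is $d_3$, while the other two entries form $\{d_1,d_2\}$ in some order.

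\textbf{Case $i\in\{0,1\}$.} From $d_3$ and the pair $\{d_1,d_2\}$ I would recover $i$, $D$ and $D'$.
\begin{itemize}
\item If $i=0$, then $d_3=D+1$, and $D$ is one of the two pair entries, namely the one equal to $d_3-1$.
\item If $i=1$, then $d_3=D$, and the pair is $\{D+1,D'+1\}$, so some pair entry equals $d_3+1$.
\end{itemize}
An ambiguity between $i=0$ and $i=1$ can occur only if the pair simultaneously contains $d_3-1$ and $d_3+1$. That forces $|D-D'|=2$ with a specific orientation. I would resolve these few configurations explicitly using the table in the proof of Lemma~\ref{lem:difference-table}; the relevant values of $j$ are near $j=2$, $j=3$, $j=1$ and near $m+2$. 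Once $i$ and $D$ are known, $j\in\{\pm D\}$, and the sign is fixed by $D'$. The two candidates $j=D$ and $j=-D$ give $D'$ values $\delta(D-4)$ and $\delta(D+4)$, which differ unless $D\in\{0,m\}$, where the sign is irrelevant anyway.

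\textbf{Case $i=2$.} The multiset is $\{D+1,D+1,D'+1\}$. When $D\neq D'$, the repeated value gives $D$ and the single value gives $D'$, and $j$ follows as above. When $D=D'$, we get $j\in\{2,m+2\}$, and these are distinguished by the value $D\in\{2,m-2\}$. This uses $m\neq4$, and $m\geq6$ guarantees it.

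\textbf{Main obstacle.} The hard part is the cross-row collisions near the endpoints of the plateaus of $F(2m,4,\cdot)$. These are the vertices with $D$ or $D'$ close to $0$ or close to $m$, where reflections and the equality $d_1=d_2$ can conspire. This is exactly where the hypothesis $m\geq6$ must enter, since small cases such as $2m=8,10$ fail by the main theorem. I would first tabulate $(D,D')$ for $j\in\{0,\dots,5\}$ and $j\in\{m-1,\dots,m+5\}$, together with the $i\in\{0,1\}$ shifts. I would then check by hand that every candidate collision either involves a landmark, which is excluded, or is impossible once $m\geq6$. For $j$ outside these windows, $D'=D\pm4$ with a definite sign determined by $j$, and the argument above applies uniformly.
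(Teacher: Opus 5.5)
Your argument is correct and is essentially the paper's: your parity decoding of $d_3$ and the pair $\{d_1,d_2\}$ reduces everything to the single cross-row coincidence between row $0$ with $D'-D=2$ and row $1$ with $D'-D=-2$. That is exactly the repeated normalized shape $(0,1,2)$ in the paper's table, and your recovery of $j$ from $(D,D')$ plays the role of the paper's use of $(x,e)$. The one check you defer is immediate and needs none of the windowed tabulation or landmark discussion in your last paragraph. The relevant columns are $j\in\{1,m+3\}$ in row $0$ and $j\in\{3,m+1\}$ in row $1$, with multisets $\mset{1,2,3},\mset{m-3,m-2,m-1}$ versus $\mset{2,3,4},\mset{m-2,m-1,m}$, which coincide only for $m\in\{3,5\}$.
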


\begin{proof}
The lower bound is \eqref{eq:metric-lower}--\eqref{eq:ordinary-torus}.  For a
column $j\in\Z_{2m}$, put
\[
  x=\delta_{2m}(j),\qquad y=\delta_{2m}(j-4),\qquad e=y-x.
\]
The values of $e$ are completely described by
\[
\begin{array}{c|c}
j&e\\ \hline
0,\ m+4,\ldots,2m-1&4\\
1,\ m+3&2\\
2,\ m+2&0\\
3,\ m+1&-2\\
4,\ldots,m&-4.
\end{array}
\]
For $m\geq6$, the pair $(x,e)$ determines $j$: in the three two-point rows
the two possible $x$-values are respectively $1,m-3$; $2,m-2$; and
$3,m-1$.

For a vertex $(r,j)$, subtracting $x$ from all three distances leaves one of
the offset multisets
\[
\begin{array}{c|c}
r&\text{offset multiset}\\ \hline
0&\mset{0,1,e}\\
1&\mset{0,1,e+1}\\
2&\mset{1,1,e+1}.
\end{array}
\]
After sorting and subtracting the smallest entry, the normalized shapes are
\[
\begin{array}{c|ccccc}
&e=-4&e=-2&e=0&e=2&e=4\\ \hline
r=0&(0,4,5)&(0,2,3)&(0,0,1)&(0,1,2)&(0,1,4)\\
r=1&(0,3,4)&(0,1,2)&(0,1,1)&(0,1,3)&(0,1,5)\\
r=2&(0,4,4)&(0,2,2)&(0,0,0)&(0,0,2)&(0,0,4).
\end{array}
\]
Only $(0,1,2)$ is repeated.  For a fixed $(r,e)$, equality of the original
multisets fixes their minimum, then $x$, and then $j$.  In the repeated-shape
case the row-zero multisets are
\[
  \mset{1,2,3},\quad \mset{m-3,m-2,m-1},
\]
whereas the row-one multisets are
\[
  \mset{2,3,4},\quad \mset{m-2,m-1,m}.
\]
No cross equality is possible for $m\geq6$; the only apparent endpoint
equation would force $m=5$.  Hence $S$ resolves.
\end{proof}

We next exclude three landmarks when the other factor is odd.

\begin{lemma}[odd torus bisector]\label{lem:c3-bisector}
Let $n\geq5$ be odd and let $G=\Z_n\times\Z_3$ with the Lee metric.  For
any $A,B\in G$, the bisector
\[
  \{x:d(x,A)=d(x,B)\}
\]
contains a nonfixed two-point orbit under $x\mapsto A+B-x$.
\end{lemma}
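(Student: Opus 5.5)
The plan is to reduce the statement to counting bisector points, using two facts: the half-turn $\sigma(x)=A+B-x$ maps the bisector to itself, and $\sigma$ has exactly one fixed point. It then suffices to show that the bisector has at least two points.

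\emph{Step 1: $\sigma$ preserves the bisector.} The Lee metric $d(x,y)=\delta_n(x_1-y_1)+\delta_3(x_2-y_2)$ is translation invariant. Since $\delta_q(-u)=\delta_q(u)$, it is also invariant under $x\mapsto -x$. Hence
\[
d(\sigma(x),A)=d(B-x,0)=d(x,B)\quad\text{and}\quad d(\sigma(x),B)=d(x,A),
\]
so $\sigma$ maps the bisector onto itself. Every orbit of $\sigma$ has one or two points. A point $x$ is fixed exactly when $2x=A+B$. Since $n$ and $3$ are both odd, multiplication by $2$ is a bijection of $\Z_n\times\Z_3$, so $\sigma$ has exactly one fixed point in the whole group. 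Therefore any bisector with at least two points contains a nonfixed point $x$, and $\{x,\sigma(x)\}$ is the required two-point orbit.

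\emph{Step 2: count the bisector using Lemma~\ref{lem:difference-table}.} If $A=B$, the bisector is the whole group, which has $3n\geq2$ points. Otherwise, translate so that $A=0$ and write $B=c=(c_1,c_2)$. A vertex $u=(u_1,u_2)$ lies on the bisector if and only if
\[
F(n,c_1,u_1)=-F(3,c_2,u_2).
\]
Put $r_1=\delta_n(c_1)$ and $r_2=\delta_3(c_2)\in\{0,1\}$. By Lemma~\ref{lem:difference-table}, the right-hand side always lies in $\{-1,0,1\}$.

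There are two cases.
\begin{itemize}
\item If $r_1\geq1$, part~(1) of the lemma (for odd $q=n$) says that $F(n,c_1,\cdot)$ attains every integer in $[-r_1,r_1]\supseteq\{-1,0,1\}$. So for each of the three values of $u_2$ there is at least one $u_1$ solving the equation, and the bisector has at least $3$ points.
\item If $r_1=0$, then $F(n,c_1,\cdot)\equiv0$. Since $c\neq0$, we have $r_2=1$, and part~(1) applied to $q=3$ shows that $F(3,c_2,\cdot)$ vanishes at exactly one $u_2$. Paired with any $u_1$, this gives $n\geq5$ bisector points.
\end{itemize}
In either case the bisector has at least two points, and Step~1 finishes the proof.

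The only delicate point is the fixed-point count in Step~1: it relies on both factor orders being odd, so that $2$ is invertible. This is also why the statement is restricted to odd $n$. The rest is a direct reading of the value sets in Lemma~\ref{lem:difference-table}.
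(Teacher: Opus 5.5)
Your proof is correct, but it reaches the conclusion by a different route than the paper.

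The paper splits into the cases $b=0$, $a=0$, both coordinates nonzero, and the trivial case $A=B$, and in each case it exhibits the orbit explicitly. In the generic case it matches the level $\pm1$ of $F(n,a,\cdot)$ with the level $\mp1$ of $F(3,b,\cdot)$. It then uses the antisymmetry $F(q,c,c-u)=-F(q,c,u)$ to see that the half-turn swaps these two solutions, which are therefore visibly distinct. In the degenerate cases it names the orbit directly: the two nonzero $\Z_3$-coordinates over the unique zero of $F(n,a,\cdot)$, or a pair $u,-u$.

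You instead observe once and for all that the half-turn preserves the bisector. Because $|G|=3n$ is odd, it also has exactly one fixed point in all of $G$. This reduces the lemma to showing that the bisector has at least two vertices. For that only the value sets in Lemma~\ref{lem:difference-table} are needed, with no case-by-case check of nonfixedness. Your argument is shorter and shows exactly where oddness enters. The paper uses the same fixed-point count for two-sets in Proposition~\ref{prop:c3-odd-lower}, but not inside this lemma.

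The paper's level-pairing viewpoint, on the other hand, is the one that carries over to Theorem~\ref{thm:lower-four}. There the second factor may be even, so $2$ is no longer invertible and the half-turn has either two fixed points or none. One also needs two nonfixed orbits, so orbits must be counted level by level rather than merely bounding the size of the bisector.
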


\begin{proof}
Translate $A$ to zero and write $B=(a,b)$.  The bisector equation is
\[
  F(n,a,u)+F(3,b,v)=0.
\]
If $b=0$ and $a\neq0$, the first function has one zero, while the two
nonzero $v$-coordinates form a nonfixed orbit.  If $a=0$ and $b\neq0$, use
the unique zero of $F(3,b,\cdot)$ and any nonzero pair $u,-u$.  If both
coordinates are nonzero, then $F(3,b,\cdot)$ takes $-1,0,1$, while
Lemma~\ref{lem:difference-table} shows that $F(n,a,\cdot)$ contains both
$-1$ and $1$.  The identity in that lemma pairs the two matched solutions.
The zero displacement is immediate.
\end{proof}

\begin{proposition}\label{prop:c3-odd-lower}
If $n\geq5$ is odd, then
\[
  \odim(C_3\mathbin{\square}C_n)\geq4.
\]
\end{proposition}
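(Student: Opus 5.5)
We argue by contradiction.  By \eqref{eq:metric-lower} and \eqref{eq:ordinary-torus} we already have $\odim\geq3$, so it suffices to show that no three-element set $S=\{w_1,w_2,w_3\}$ is an outer multiset resolving set of $G=\Z_n\times\Z_3$.  The tool is the half-turn $\sigma(x)=A+B-x$ attached to a pair of landmarks $A,B$.  This map is an isometry of the torus and swaps $A$ with $B$.  Consequently, if $d(x,A)=d(x,B)$, then
\[
  d(\sigma x,A)=d(x,B)=d(x,A)=d(\sigma x,B).
\]
So for a two-point orbit $\{x,\sigma x\}$ inside the bisector of $A$ and $B$, the two distances to $A$ and $B$ form the same multiset $\mset{d,d}$ at $x$ and at $\sigma x$.

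The plan is to pick the pair $\{A,B\}\subseteq S$ so that the third landmark $C$ also fails to separate the orbit.  That requires $d(x,C)=d(\sigma x,C)$, which is the same as $d(x,C)=d(x,\sigma C)$ with $\sigma C=A+B-C$.  The cleanest way to arrange this is to make $C$ a fixed point of the half-turn, i.e.\ $2C=A+B$.

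Translate so that $C=0$.  Because $n$ and $3$ are both odd, $G$ has odd order and doubling is a bijection, so every element has a unique half.  This suggests a different choice: take the half-turn centered at the third landmark, $\tau(x)=-x$ (with $C=0$).  This is an isometry fixing $C$, but it moves $A$ and $B$ to $-A$ and $-B$, and these are generally not landmarks, so $\tau$ does not directly give a collision.

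Returning to $\sigma$, the condition $d(x,C)=d(\sigma x,C)$ means that $x$ lies in the bisector of $C$ and $A+B-C$ as well as in the bisector of $A$ and $B$.  The natural repair is to intersect the two bisectors and argue that the intersection contains a nonfixed orbit.  Both bisectors are $\sigma$-invariant: the first by the displayed computation, and the second because $\sigma$ swaps $C$ and $A+B-C$.  Each is described by an equation of the form $F(n,\cdot,u)+F(3,\cdot,v)=0$.  Lemma~\ref{lem:c3-bisector} guarantees a nonfixed orbit in each bisector separately, but intersecting two of them requires a finer count.

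The plan is therefore to use the freedom of choosing which of the three pairs plays the role of $\{A,B\}$, and to case-split on how many landmarks lie in the same $\Z_3$-row and on the differences of their $\Z_n$-coordinates.  In each case we exhibit an explicit orbit $\{x,\sigma x\}$ with $x\neq\sigma x$, using the value table of Lemma~\ref{lem:difference-table}.  By that table, for odd $n$ the function $F(n,c,\cdot)$ attains the values $\pm r$ with large multiplicity $(n+1)/2-r$ and attains $0$ exactly once.  The $v$-coordinate has only three choices, which keeps the case analysis finite in shape.

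We must also make sure that neither $x$ nor $\sigma x$ is a landmark.  The orbits supplied by the plateau regions of the table are large, so a nonlandmark orbit can be chosen there once $n\geq5$.

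The main obstacle is this simultaneous-bisector step: it is where the real content of the proof lies, and where a small residual case (for example $n=5$ with a degenerate landmark configuration) might need direct verification.
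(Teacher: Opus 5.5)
Your reduction to three-element sets via the metric dimension is fine. The gap is in the core step, and as designed it cannot be completed. You require a nonfixed orbit of $\sigma(x)=A+B-x$ lying \emph{simultaneously} in the bisector of $A,B$ and in the bisector of $C$ and $A+B-C$, and you leave its existence as ``the main obstacle.'' Such an orbit need not exist for any choice of the pair.

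Take $S=\{(0,0),(1,0),(0,1)\}\subseteq\Z_n\times\Z_3$ with $n=2\ell+1\geq5$:
\begin{itemize}
\item For the pair $(0,0),(1,0)$, the bisector is the column $u=\ell+1$. Its only nonfixed orbit $\{(\ell+1,1),(\ell+1,2)\}$ has distances $\ell$ and $\ell+1$ to $(0,1)$.
\item For the pair $(0,0),(0,1)$, the bisector is the row $v=2$, with orbits $\{(u,2),(-u,2)\}$. Equal distances to $(1,0)$ means $\delta_n(u-1)=\delta_n(u+1)$, which forces $u=0$.
\item For the pair $(1,0),(0,1)$, the nonfixed orbits in the bisector are $\{(u,0),(1-u,1)\}$ with $u\in\{0\}\cup\{\ell+2,\ldots,2\ell\}$. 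Equal distances to $(0,0)$ means $\delta_n(u)=\delta_n(u-1)+1$, which forces $1\leq u\leq\ell$.
\end{itemize}
Yet this $S$ is not resolving. The vertices $(\ell,1)$ and $(\ell+1,0)$ are swapped by $x\mapsto(0,1)-x$ and both have representation $\mset{\ell,\ell,\ell+1}$. Their distances to $(0,0)$ and $(0,1)$ are \emph{exchanged} ($\ell+1,\ell$ versus $\ell,\ell+1$), not equal.

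The missing observation is that the bisector condition on $A,B$ is superfluous. Because $\sigma$ is an isometry interchanging $A$ and $B$, you have $d(\sigma x,A)=d(x,B)$ and $d(\sigma x,B)=d(x,A)$ for \emph{every} $x$. So the two-element multiset of these distances is automatically $\sigma$-invariant. Only $d(x,C)=d(x,A+B-C)$ is needed. Lemma~\ref{lem:c3-bisector}, applied to $C$ and $A+B-C$ (whose half-turn is exactly $\sigma$), then yields a nonfixed orbit at once.

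The real work shifts to landmark avoidance, which you dismiss too quickly. The orbit never contains $C$, but it may be $\{A,B\}$ itself, and $A$ lies in that bisector exactly when $d(A,C)=d(B,C)$. Choosing as $C$ a landmark whose distances to the other two differ settles every non-equilateral triple. Equilateral triples need a separate argument, since the lemma guarantees only one nonfixed orbit, and your remark about large plateaus does not supply a second one. The paper treats them by row pattern: all three landmarks in one row; all three rows occupied; or two landmarks at columns $0,L$ of one row and the third at column $q$ of another row. In the last pattern, equilaterality forces $(L,q)=(2,1)$ or $3L=n+2$, $q=2L-1$. In each case the paper exhibits explicit colliding pairs outside $S$.
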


\begin{proof}
Sets of size at most two fail directly.  For a two-set $\{P,Q\}$, the map
$h(x)=P+Q-x$ swaps the two distances.  Since $|\Z_n\times\Z_3|$ is odd,
$h$ has one fixed point; apart from the possible orbit $\{P,Q\}$, it has
nonfixed orbits disjoint from the landmarks.

Let $S=\{P,Q,R\}$.  Put $R'=P+Q-R$ and let $h(x)=P+Q-x$.  If $x$ lies on
the bisector of $R,R'$ and $y=h(x)$, then the distances to $P,Q$ are
exchanged and the distance to $R$ is preserved.  A nonfixed such orbit
cannot contain $R$; it meets $P$ or $Q$ only when it is $\{P,Q\}$, which
requires $d(P,R)=d(Q,R)$.  If the three pairwise distances in $S$ are not
all equal, choose $R$ so that its distances to the other two differ and
apply Lemma~\ref{lem:c3-bisector}.

It remains to treat an equilateral triple of common distance $L$.  For this
paragraph, write the coordinates in the order
\[
  (\text{row},\text{column})\in\Z_3\times\Z_n.
\]
If all
landmarks occupy one row, two vertices in the empty rows and the same unused
column have identical representations.  If all three rows are occupied,
choose the landmarks in rows $0$ and $1$.  When their columns differ, the
unique equidistant column of the odd cycle differs from both landmark columns;
when they coincide, choose any unused column.  The vertices in rows $0$ and
$1$ at the chosen column exchange the first two distances.  Their distances
to the row-$2$ landmark are equal because both row distances are one.  Hence
the two vertices are outside $S$ and have the same representation.

In the remaining row pattern, normalize the landmarks to
\[
  (0,0),\quad(0,L),\quad(1,q),
  \qquad 1\leq L\leq\lfloor n/2\rfloor.
\]
Equilaterality gives
\[
  \delta_n(q)=\delta_n(q-L)=L-1.
\]
The first equation gives $q=L-1$ or $n-L+1$, and the second gives
$q=1$ or $2L-1$.  The intersections are precisely
\[
  (L,q)=(2,1),\qquad\text{or}\qquad n+2=3L,\ q=2L-1.
\]
If $n=2m+1$ in the first case, row-zero columns $m+1,m+2$ both have
representation $\mset{m-1,m,m+1}$.  In the second case write $L=2r+1$;
row-zero columns $r,r+1$ both have representation
$\mset{r,r+1,3r+1}$.  These vertices lie outside $S$.
\end{proof}

\section{A universal lower bound}\label{sec:lower}

We now prove the lower bound that drives the classification away from the
thin factor.

\begin{theorem}\label{thm:lower-four}
For all $s,t\geq4$,
\[
  \odim(C_s\mathbin{\square}C_t)\geq4.
\]
\end{theorem}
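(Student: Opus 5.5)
We use the half-turn argument from Proposition~\ref{prop:c3-odd-lower}, now on $\Z_s\times\Z_t$. First, if $s$ and $t$ are both even, then \eqref{eq:metric-lower} and \eqref{eq:ordinary-torus} already give $\odim\geq\dim=4$. So assume at least one factor is odd, say $s$. Then \eqref{eq:ordinary-torus} gives $\odim\geq3$, and it remains to show that no three-set $S=\{P,Q,R\}$ resolves.

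For such an $S$, let $h(x)=P+Q-x$, and put $R'=P+Q-R$. If $x$ lies on the bisector of $R$ and $R'$, then $x$ and $h(x)$ have the same multiset: the distances to $P,Q$ are exchanged and the distance to $R$ is preserved. So it suffices to find a nonfixed $h$-orbit on this bisector that avoids $S$. Note the following.
\begin{itemize}
\item $R$ is never on the bisector, since $0\neq d(R,R')$, unless $R=R'$. In that case the bisector is the whole torus and any nonfixed orbit other than $\{P,Q\}$ works.
\item $P$ or $Q$ can lie in such an orbit only if the orbit is $\{P,Q\}$.
\item $h$ has at most four fixed points, namely the solutions of $2x=P+Q$.
\end{itemize}
Hence it is enough to show that the bisector has at least $7$ points, or more precisely that it contains two distinct nonfixed orbits. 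Write $R'-R=(c,d)$. By translation the bisector equation becomes
\[
F(s,c,u)+F(t,d,v)=0,
\]
and the pairing $u\mapsto c-u$, $v\mapsto d-v$ of Lemma~\ref{lem:difference-table} realises $h$ on the solution set.

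Next we count solutions with Lemma~\ref{lem:difference-table}. Because $s$ is odd, $F(s,c,\cdot)$ takes every integer value in $[-r_s,r_s]$. The plateau values $\pm r_s$ have multiplicity $(s+1)/2-r_s$, and every interior value occurs once. For each value $k$ taken by $-F(t,d,\cdot)$ with $|k|\le r_s$, we get as many solutions as the product of the multiplicities. We split into cases.
\begin{itemize}
\item $c=0$ or $d=0$: one factor contributes a whole cycle of solutions, so there are at least $4$ or more points per matching value, and the count is immediate.
\item Both $c,d\neq0$: match $0$ with $0$, and $\pm1$ with $\mp1$ when $r_t$ has the right parity, or $\pm2$ otherwise. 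Since $t\geq4$, each such value is taken at least twice when $t$ is even, or once with the plateau contributions when $t$ is odd. Using $s,t\geq4$, this should give at least $7$ bisector points in every configuration. The few tight configurations, with $r$ at its maximum so that the plateaus have length $1$, are to be listed and checked by hand.
\end{itemize}

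As in Proposition~\ref{prop:c3-odd-lower}, we may choose which landmark plays the role of $R$. When the three pairwise distances are not all equal, take $R$ with $d(P,R)\neq d(Q,R)$; then $\{P,Q\}$ is not on the bisector and the counting requirement weakens. The equilateral case $d(P,Q)=d(Q,R)=d(R,P)$ needs the separate treatment, as in the earlier proposition. Here we would again normalise two landmarks, solve for the third via $\delta$-equations, and either exhibit explicit colliding pairs or note that some other choice of the distinguished landmark still gives a bisector with a free nonfixed orbit.

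I expect the main obstacle to be the careful counting in the tight cases. These occur when $c$ or $d$ is near half the cycle, or when $t$ is even with an odd displacement so that $F(t,d,\cdot)$ misses $0$, and they must be combined with the equilateral degeneracy. There I would first try to exploit the freedom in choosing $R$ among the three landmarks. Only if that fails would I fall back on an explicit small case analysis.
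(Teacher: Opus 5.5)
\textbf{There is a genuine gap in the counting step.} You expect the bisector of $R,R'$ to contain two nonfixed $h$-orbits in every configuration, up to a few tight cases checked by hand. When $t$ is even this is false: for some choices of the distinguished landmark the bisector has no nonfixed orbit at all. Write $R'-R=(c,d)$.
\begin{itemize}
\item If $c=0$ and $\delta_t(d)$ is odd, then $F(s,0,\cdot)\equiv0$ while $F(t,d,\cdot)$ never vanishes (Lemma~\ref{lem:difference-table}(2)). So the bisector is empty, and your case ``$c=0$, the count is immediate'' fails.
\item If $\delta_s(c)=1$ and $\delta_t(d)>0$ is even, then $F(s,c,\cdot)$ takes only $-1,0,1$ and $F(t,d,\cdot)$ takes only even values. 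So every solution has both differences zero, i.e.\ satisfies $2x=P+Q$, and is fixed by $h$.
\end{itemize}
Your rule of choosing $R$ with $d(P,R)\neq d(Q,R)$ does not avoid this. In $C_5\square C_4$ take $R=(0,0)$, $P=(1,0)$, $Q=(4,1)$. The pairwise distances are $1,2,3$, yet $R'=P+Q-R=(0,1)$ differs from $R$ by one step of $C_4$, so $d(x,R)-d(x,R')=\pm1$ for every $x$. That rule only keeps $\{P,Q\}$ off the bisector; it says nothing about whether the bisector has any nonfixed orbit.

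\textbf{What is missing is a proof that \emph{some} choice of the distinguished landmark avoids both bad types.} This is the heart of the paper's argument.
\begin{itemize}
\item The three displacements $Q+R-2P$, $R+P-2Q$, $P+Q-2R$ sum to zero. The first bad type has odd $\Z_t$-component and the second has even $\Z_t$-component. So if all three were bad, parity in $\Z_t$ would force zero or two of the first type.
\item The $\Z_s$-components would then be three values $\pm1$, or $0,0,\pm1$. Neither can sum to $0$ in $\Z_s$ for odd $s\geq5$.
\item For a good displacement, the multiplicities in Lemma~\ref{lem:difference-table} give at least two nonfixed orbits whenever $t\neq4$. At most one of them is $\{P,Q\}$, so no equilateral case analysis is needed.
\item For $t=4$, two good types ($d=0$, $c\neq0$; and $d=2$, $\delta_s(c)\geq2$) may have only one nonfixed orbit. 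The paper handles the case where that orbit is $\{P,Q\}$ by swapping the roles of the landmarks.
\end{itemize}
When both factors are odd, every displacement is already good and your two-orbit count does go through. The obstruction is confined to the mixed-parity case, which is exactly where your proposal stops at trying to exploit the freedom in choosing $R$.
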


\begin{proof}
If both factors are even, this is immediate from
\eqref{eq:metric-lower}--\eqref{eq:ordinary-torus}.  Hence write the torus as
$G=\Z_n\times\Z_m$, where $n\geq5$ is odd and $m\geq4$.

Fix three landmarks $P,Q,R$ and define
\[
  h(x)=P+Q-x,\qquad R'=h(R).
\]
Every nonfixed orbit $\{x,h(x)\}$ in the bisector of $R,R'$ has the same
unordered distance triple to $P,Q,R$.  It never contains $R$, and if it
meets $P$ or $Q$ it is exactly $\{P,Q\}$.  It therefore suffices to find
two nonfixed bisector orbits for at least one choice of the third landmark.

After translating $R$ to zero, put
\[
  D_R=P+Q-2R=(a,b),\qquad
  \alpha=\delta_n(a),\quad\beta=\delta_m(b).
\]
The bisector equation is
\begin{equation}\label{eq:product-bisector}
  F(n,a,u)+F(m,b,v)=0.
\end{equation}
Suppose first that $m$ is even.  Lemma~\ref{lem:difference-table} shows that
\eqref{eq:product-bisector} has no nonfixed orbit exactly in the two cases
\begin{align*}
\mathrm{B0}:&\quad a=0\text{ and }\beta\text{ is odd},\\
\mathrm{B1}:&\quad \alpha=1\text{ and }\beta>0\text{ is even}.
\end{align*}
Indeed, in B0 the even-cycle difference has no zero, while the odd-cycle
difference is identically zero.  In B1 the only common value is zero and all
solutions are fixed by $h$.  Outside B0 and B1, the two value sets share a
nonzero absolute value, or zero has a free coordinate.

Moreover, if $m\geq6$, every displacement outside B0 and B1 supplies at
least two nonfixed orbits.  This follows directly from the multiplicities in
Lemma~\ref{lem:difference-table}:
\begin{itemize}
  \item if $\alpha=0$ and $\beta$ is even, the zero level supplies at least
  $n-1$ orbits;
  \item if $\alpha>0$ and $\beta=0$, it supplies at least $(m-2)/2$;
  \item if $\alpha\geq2$ and $\beta>0$ is even, level $2$ supplies at least
  two;
  \item if $\beta$ is positive and odd, level $1$ supplies at least two,
  either in the odd factor when $\alpha=1$ or in the even factor otherwise.
\end{itemize}
If $m$ is odd, every displacement is good.  A zero component gives at least
$(\min\{n,m\}-1)/2$ zero-level orbits.  If both components are nonzero and
one of $\alpha,\beta$ is one, level $1$ has multiplicity at least two in one
factor; if both exceed one, levels $1$ and $2$ each give an orbit.

Consider the three displacements
\[
  D_P=Q+R-2P,\quad D_Q=R+P-2Q,\quad D_R=P+Q-2R.
\]
They sum to zero.  In the even coordinate, B0 has odd component and B1 has
even component, so the number of B0 displacements would have to be zero or
two if all three were bad.  With no B0 displacement, all three odd-coordinate
components are $\pm1$, whose sum cannot vanish in $\Z_n$ for $n\geq5$ odd.
With two B0 displacements, the odd-coordinate components are
$0,0,\pm1$, again impossible.  Thus at least one displacement is good.  For
$m\neq4$ it has at least two nonfixed bisector orbits, and at most one of
them is $\{P,Q\}$.

It remains to handle $m=4$.  The multiplicity table shows that a good
displacement can have only one nonfixed orbit only in
\begin{align*}
\mathrm{E0}:&\quad b=0, a\neq0,\\
\mathrm{E2}:&\quad b=2, \alpha\geq2.
\end{align*}
If the unique E0 orbit is $\{P,Q\}$, translate $R$ to zero and label
\[
  P=(u,1),\qquad Q=(u,3),\qquad u\neq0.
\]
Switching to the pair $P,R$ with third point $Q$ gives displacement
$(-u,3)$, which is nonexceptional and hence has at least two orbits.

For E2, there is nothing to prove unless the unique orbit is $\{P,Q\}$.
Translate $R$ to zero and label the two points in this orbit by $U,V$ so that
their $C_4$ coordinates are $0$ and $2$.  Switch to the pair $U,R$ with third
point $V$.  The new third-point displacement is $U-2V$, whose $C_4$
component is zero.  If it is E0, its unique nonfixed orbit has $C_4$
coordinates $1$ and $3$, whereas the three landmarks $U,R,V$ occupy rows
$0,0,2$.  This orbit is therefore outside the landmarks.  If the new
displacement is zero, it has several nonfixed orbits.  This closes the
$C_4$ case.

Thus every three-set fails.  Sets of smaller size are excluded by
\eqref{eq:ordinary-torus}, completing the proof.
\end{proof}

\section{Four-landmark constructions}\label{sec:four}

For $A\subseteq\Z_t$ with $|A|=3$ and $b\in\Z_t$, define
\[
  S(A,b)=\{(0,a):a\in A\}\cup\{(1,b)\}.
\]
For a vertex $(i,j)$ put
\[
  \gamma=\delta_s(i),\qquad
  \eps=\delta_s(i-1)-\gamma.
\]
Its four distances to $S(A,b)$ are $\gamma$ plus the sorted cyclic code
\begin{equation}\label{eq:cyclic-code}
 C_t(j,\eps)=\operatorname{sort}
 \bigl(\mset{\delta_t(j-a):a\in A}\cup
 \mset{\delta_t(j-b)+\eps}\bigr).
\end{equation}
Let $\mu(j,\eps)$ be the least entry and let
\[
  N(j,\eps)=C_t(j,\eps)-\mu(j,\eps)(1,1,1,1).
\]

\begin{lemma}\label{lem:row-states}
If $s=2h$, the possible pairs $(\gamma,\eps)$ are
\[
  (0,1),\qquad (r,-1)\ (1\leq r\leq h),\qquad
  (r,1)\ (1\leq r\leq h-1).
\]
If $s=2h+1$, there is in addition the unique pair $(h,0)$.
\end{lemma}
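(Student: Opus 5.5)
This is a direct computation of $\eps=\delta_s(i-1)-\delta_s(i)$ as $i$ ranges over $\Z_s$. I will take canonical representatives $0\leq i<s$ and read off $\gamma=\delta_s(i)$ together with $\eps$. In fact the statement is just Lemma~\ref{lem:difference-table} with $q=s$ and $c=1$, so that $r=1$ and $\eps=-F(s,1,i)$. I would nevertheless verify the explicit pairs by hand, because the lemma gives only value multiplicities, whereas we need the joint pairs $(\gamma,\eps)$.

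\emph{Even case $s=2h$.}
\begin{itemize}
  \item For $i=0$ we have $\gamma=0$ and $\delta_s(-1)=1$, so $\eps=1$. This gives the pair $(0,1)$.
  \item For $1\leq i\leq h$ we have $\gamma=i$ and $\delta_s(i-1)=i-1$, since $i-1\leq h$. Hence $\eps=-1$, giving the pairs $(r,-1)$ for $1\leq r\leq h$.
  \item For $h+1\leq i\leq 2h-1$ we have $\gamma=2h-i=:r\in[1,h-1]$ and $\delta_s(i-1)=2h-i+1=r+1$, because $i-1\geq h$. Hence $\eps=1$, giving the pairs $(r,1)$ for $1\leq r\leq h-1$.
\end{itemize}

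\emph{Odd case $s=2h+1$.} The same computation applies with three intervals.
\begin{itemize}
  \item For $1\leq i\leq h$ we get $(i,-1)$.
  \item For $h+2\leq i\leq 2h$ we get $\gamma=2h+1-i\in[1,h-1]$ and $\delta_s(i-1)=\gamma+1$, so $\eps=1$.
  \item The single extra index $i=h+1$ gives $\gamma=\delta_s(h+1)=h$ and $\delta_s(h)=h$, so $\eps=0$.
\end{itemize}
Together with $(0,1)$ from $i=0$, this recovers the list for $s=2h$ plus the unique pair $(h,0)$.

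The only delicate point is the boundary index: $i=h$ in the even case, and $i=h,h+1$ in the odd case, where $\delta_s$ switches branches. I would check these indices explicitly, as above. It also follows from the computation that each listed pair is realized, and each exactly once except for the symmetric duplication already visible in the lists.
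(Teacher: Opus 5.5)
Your proof is correct and is essentially the paper's argument. The paper walks from row $0$ in both directions around $C_s$: the difference is $-1$ one way and $1$ the other until the antipode, with one extra zero-difference row when $s$ is odd. Your interval-by-interval computation of $\delta_s(i)$ and $\delta_s(i-1)$ is the explicit version of that walk. Your closing remark about a ``symmetric duplication'' is unnecessary, since each listed pair is realized by exactly one row, but it does not affect the lemma.
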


\begin{proof}
Walk from row $0$ in each direction around $C_s$ and compare the distances
to rows $0$ and $1$.  Before the antipode the difference is $-1$ in one
direction and $1$ in the other.  An odd cycle has one remaining antipodal
row, where the difference is zero.
\end{proof}

The next lemma is the computer-assisted component of the infinite proof.  Its
certificate is described after the statement.

\begin{lemma}[cyclic collision classification]\label{lem:cyclic-collisions}
The following are the complete lists of non-singleton fibers of $N$.
\begin{enumerate}
  \item Let $t=2m+1$, $m\geq9$, $A=\{0,4,m\}$, and $b=m+1$.  Then
  \begin{align*}
  (m+2,-1;\mu=0)&\sim(1,0;\mu=1),\\
  (2,0;\mu=2)&\sim(m+2,1;\mu=2),\\
  (j,0)&\sim(2m+3-j,0),\qquad4\leq j\leq m-2,
  \end{align*}
  where the minimum on the right in the last line is two larger.

  \item Let $t=2m$, $m\geq10$, $A=\{0,4,m+1\}$, and $b=m-1$.  Then
  \begin{align*}
  (1,0;\mu=1)&\sim(m+1,0;\mu=0),\\
  (3,0;\mu=1)&\sim(m+3,0;\mu=2),\\
  (j,0)&\sim(2m+2-j,0),\qquad4\leq j\leq m-2,
  \end{align*}
  where the minimum on the right in the last line is two larger.
\end{enumerate}
There are no other equal normalized codes.
\end{lemma}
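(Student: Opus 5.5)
The plan is to turn the statement into a finite collection of linear integer arithmetic (Presburger) problems that do not depend on the size of $m$, and then handle those problems either by a piecewise-affine case analysis or by quantifier elimination.

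\emph{Step 1: piecewise-affine form of the code.} The first step is to write $C_t(j,\eps)$ as a piecewise-affine function of $(j,m)$. By Lemma~\ref{lem:difference-table}, for $\delta_t(j-a)$ with $a\in\{0,4,m\}$ or $a\in\{0,4,m+1\}$, and for $\delta_t(j-b)$, the breakpoints in $j$ occur only at points of the form $a$, $a+\lfloor t/2\rfloor$ and $a+\lceil t/2\rceil$. Each of these is an affine expression in $m$. Once $m\ge 9$ (odd $t$) or $m\ge 10$ (even $t$), these breakpoints lie in a fixed cyclic order. So $\Z_t$ splits into a bounded number of intervals, each with affine endpoints in $m$, and on each interval every distance is $\pm j+c$ for some $c$ affine in $m$.

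\emph{Step 2: sorting and normalizing.} Next, $\eps$ takes one of the values $\{-1,0,1\}$, as allowed by Lemma~\ref{lem:row-states}. The sorting permutation is determined by finitely many linear comparisons between the four entries. Refining the intervals by these comparisons gives finitely many cells. On each cell, $N(j,\eps)$ is a fixed vector of affine forms in $(j,m)$, and so is $\mu(j,\eps)$.

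\emph{Step 3: the collision equations.} For each ordered pair of cells $(\sigma,\tau)$ and each pair of values $(\eps,\eps')$, consider the system $N_\sigma(j,\eps)=N_\tau(j',\eps')$ with $(j,\eps)\ne(j',\eps')$. This is a system of linear equations and inequalities in $j,j',m$.

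\emph{Step 4: solving the systems.} There are two routes. For a human-checkable proof, note that each system forces $j'$ to be affine in $j$, or forces $j$ and $j'$ to be constants, or has no solution. In every case the solution set can be read off directly, and one checks that it matches the listed fibers: for example, $j'=2m+3-j$ on $4\le j\le m-2$ with $\eps=\eps'=0$ and $\mu'=\mu+2$. For a certified version, encode the full statement as a closed Presburger sentence:
\[
\forall m\ge m_0\ \forall j,j',\eps,\eps'\ \bigl(N(j,\eps)=N(j',\eps')\to \text{listed}\bigr).
\]
Here $\delta$ is expressed through case splits on representatives in $[0,t)$. Cooper-style quantifier elimination then decides this sentence exactly, which is the computational certificate the paper alludes to. In addition, one verifies directly that each listed pair really collides, using the affine formulas together with the stated $\mu$ values.

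\emph{Main obstacle.} The hard part is the completeness direction. The number of cells grows combinatorially: there are up to eight breakpoints, six pairwise comparisons, and three values of $\eps$ on each side. Spurious solutions also appear for small $m$, and these are exactly why the thresholds $m\ge9$ and $m\ge10$ are needed. I would first try to cut down the work by symmetry, using the reflection $j\mapsto -j$ as far as $A$ permits, and by the invariant $\sum N$ together with the parity of the entries. Parity is especially useful: all entries of $N$ shift in parity together when $j$ changes, except the $\eps$-entry. This should prune most pairs of cells before any explicit solving. The remaining boundary cases, where $m$ is near the threshold, are exactly where the elimination procedure is needed.
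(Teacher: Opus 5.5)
Your proposal follows the paper's proof: affine branches for each distance, refinement by sorting order into finitely many chambers on which $N$ is an affine tuple for each $\eps\in\{-1,0,1\}$, comparison of every ordered pair of chambers, and exact linear-integer quantifier elimination for completeness, with the listed collisions confirmed by substitution (your fixed breakpoint order for large $m$ just makes coverage evident by hand, where the paper certifies coverage by an elimination query). One correction to your optional pruning idea: $\delta_t(j-a)\equiv j-a\pmod 2$ holds only for even $t$, because for odd $t$ the distance stalls at the antipode. So the parity invariant helps only in the second family, where it does exclude every pair with $\eps=0$ on one side and $\eps=\pm1$ on the other.
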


\begin{proof}
Each of the four cycle distances in \eqref{eq:cyclic-code} has four affine
branches, determined by the landmark, its antipode, and the orientation of
the shorter arc.  On a fixed choice of branches and a fixed nondecreasing
order of the four affine values, $N(j,\eps)$ is an affine integer tuple.

The accompanying script \texttt{certify-four-family-linear.wl} constructs
these chambers for each $\eps\in\{-1,0,1\}$.  Exact elimination over the
integers proves that they cover all permitted $(m,j)$.  It then compares
every ordered pair of chambers and solves equality of the four normalized
affine coordinates.  For the odd family the chamber counts are
$14,16,14$; for the even family they are $16,17,18$.  In all six layers the
uncovered set is empty.  A second exact query proves that there are no
solutions outside the displayed families, and a third proves that no member
of the displayed families is missing.

All constraints passed to the eliminator are linear integer equalities and
inequalities; the finite samples in the discovery loop merely propose
chambers, while the final coverage query proves the universal statement.  The
retained run under Wolfram Engine 14.3.0 reports
\[
  \texttt{coverage=True},\qquad
  \texttt{unexpected=0},\qquad
  \texttt{missing=0}
\]
for both families.  The certificate source is included with the paper and
reproduces this output.  Direct substitution verifies every displayed collision; for example,
the long zero-layer pairs differ by an additive constant two before
normalization.
\end{proof}

\begin{theorem}\label{thm:four-constructions}
For every $s\geq3$, each set in the following table outer-multiset-resolves
$C_s\mathbin{\square}C_t$.
\[
\begin{array}{c|c}
t&S\\ \hline
15&\{(0,0),(0,3),(0,7),(1,14)\}\\
16&\{(0,0),(0,4),(0,9),(1,15)\}\\
18&\{(0,0),(0,5),(0,11),(1,10)\}\\
2m+1\geq19&\{(0,0),(0,4),(0,m),(1,m+1)\}\\
2m\geq20&\{(0,0),(0,4),(0,m+1),(1,m-1)\}.
\end{array}
\]
\end{theorem}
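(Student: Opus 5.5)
Each set has the form $S(A,b)$, so every vertex $(i,j)$ is described by its row state $(\gamma,\eps)$ from Lemma~\ref{lem:row-states} together with its column $j$. Its representation is $\gamma+C_t(j,\eps)$. Since $\gamma$ enters only as a uniform shift, two vertices $(i,j)$ and $(i',j')$ collide exactly when
\[
N(j,\eps)=N(j',\eps')
\quad\text{and}\quad
\gamma+\mu(j,\eps)=\gamma'+\mu(j',\eps').
\]
The plan is therefore to start from the non-singleton fibers of $N$ and check, for each one, whether the minimum equation can be met by admissible row states. Outside those fibers, equal $N$ forces $(j,\eps)=(j',\eps')$. Then $\gamma=\gamma'$, and Lemma~\ref{lem:row-states} shows that $(\gamma,\eps)$ determines the row up to the reflection $i\mapsto 1-i$. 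That reflection fixes $\eps$ only at the antipodal row of an odd cycle, where $(h,0)$ is a single row. So $(\gamma,\eps)$ in fact determines $i$, and the two vertices are equal.

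For the two infinite families ($t=2m+1\geq19$ and $t=2m\geq20$, so $m\geq9$, resp.\ $m\geq10$), I take the fiber lists from Lemma~\ref{lem:cyclic-collisions} and treat each pair separately.

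\begin{itemize}
\item The long zero-layer pairs $(j,0)\sim(j',0)$ have right-hand minimum two larger. A collision would need $\gamma=\gamma'+2$ with both states of the form $(\cdot,0)$. By Lemma~\ref{lem:row-states}, $\eps=0$ occurs only for odd $s$ and only at $\gamma=h$, so $\gamma=\gamma'$, which is impossible.
\item Pairs with $\eps=\eps'=0$ and different $\mu$ (for instance $(1,0;\mu=1)\sim(m+1,0;\mu=0)$) fail in the same way.
\item Pairs with equal $\mu$ and different $\eps$, such as $(2,0;2)\sim(m+2,1;2)$, need $\gamma=\gamma'$ with states $(\gamma,0)$ and $(\gamma,1)$. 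This forces $\gamma=h$ for odd $s$, but $(h,1)$ is not a row state since only $\gamma\leq h-1$ carries $\eps=1$. So there is no collision.
\item The pair $(m+2,-1;0)\sim(1,0;1)$ needs $\gamma=\gamma'+1$ with $\eps'=0$, so $\gamma'=h$ and $\gamma=h+1>h$, which is impossible.
\end{itemize}

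Finally, a vertex of $S$ itself has a zero distance. It could only collide with vertices outside $S$, and the definition only compares vertices outside $S$, so landmark vertices need no separate treatment. I also have to handle $s=3,4$ and small $s$ in general: Lemma~\ref{lem:row-states} still applies for $s\geq3$ ($h\geq1$), so the argument is uniform in $s$.

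For the sporadic $t\in\{15,16,18\}$, the reduction to $(j,\eps)$ shows that only the finite set $\Z_t\times\{-1,0,1\}$ matters, independently of $s$. So I compute all $3t$ normalized codes $N(j,\eps)$ with minima $\mu$ directly and list the coinciding ones. Each resulting pair is then run through the same row-state test as above; I expect every surviving fiber to be excluded by the fact that $\eps=0$ forces $\gamma=h$ and $\eps=1$ forces $\gamma\leq h-1$. This is a finite check of a few dozen codes per value of $t$, supported by the accompanying certificates.

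The main obstacle is the bookkeeping in the minimum condition: one must verify the sign convention $\eps=\delta_s(i-1)-\gamma$ for the relevant pairs, and confirm that no fiber has a $\mu$ difference of exactly $\pm1$ compatible with states $(h,0)$ and $(h\pm1,\cdot)$. The pair $(m+2,-1)\sim(1,0)$ is the delicate one, and I would recheck its direction first.
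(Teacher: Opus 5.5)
Your proposal is correct and follows the paper's proof essentially step for step: a collision forces equal normalized codes together with the lifting condition $\gamma+\mu=\gamma'+\mu'$. Each fiber class from Lemma~\ref{lem:cyclic-collisions} is then excluded by the row-state constraints $\gamma=h$ for $\eps=0$, $\gamma\le h$ for $\eps=-1$, and $\gamma\le h-1$ for $\eps=1$, and so is each enumerated fiber for $t\in\{15,16,18\}$. For those endpoint values the paper's enumeration finds only zero-layer pairs with unequal minima, plus, for $t=15$, zero-layer versus $(\pm1)$-layer pairs, so the constraint $\gamma\le h$ on the negative layer is also needed there. One small slip: the reflection that preserves $\gamma$ is $i\mapsto -i$, not $i\mapsto 1-i$. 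Your conclusion that $(\gamma,\eps)$ determines the row is still right, because Lemma~\ref{lem:row-states} lists exactly $s$ distinct states.
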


\begin{proof}
For the two infinite families, suppose two vertices have the same full
distance multiset.  Their normalized cyclic codes must be equal, so they
belong to one of the classes in Lemma~\ref{lem:cyclic-collisions}.  Equality
of the unnormalized signatures additionally requires
\begin{equation}\label{eq:lifting}
  \gamma+\mu(j,\eps)=\gamma'+\mu(k,\eta).
\end{equation}

In the first odd class, \eqref{eq:lifting} would require a negative layer
with $\gamma=h+1$, impossible by Lemma~\ref{lem:row-states}.  In the second,
the zero layer has $\gamma=h$ while a positive layer has
$\gamma\leq h-1$.  The long class lies in the unique zero layer, but its
minima differ by two.  In the even family all collisions lie in the zero
layer and have unequal minima; an even first factor has no zero layer, while
an odd first factor has only one.

The endpoint values $15,16,18$ have respectively six, five, and six
normalized collision classes.  Direct exact enumeration of their at most
$3t$ cyclic states gives only zero-layer pairs with unequal minima for
$t=16,18$.  For $t=15$, the two zero-layer pairs again have unequal minima;
the four cross-layer pairs would require a negative layer with
$\gamma=h+2$ or $h+1$, or a positive layer with $\gamma=h$ or $h+1$.
Lemma~\ref{lem:row-states} excludes all four.  The complete endpoint fiber
list is emitted by the accompanying script
\texttt{analyze-four-template-codes.mjs}.
\end{proof}

Combining Theorems~\ref{thm:lower-four} and
\ref{thm:four-constructions} gives the exact value four whenever
$s,t\geq4$ and $\max\{s,t\}\in\{15,16\}\cup\{18,19,\ldots\}$.

\section{The finite strip}\label{sec:computation}

It remains to settle $C_3\mathbin{\square}C_t$ for $3\leq t\leq17$ and the
strip $4\leq s,t\leq17$ by finite exact computation.

Translation invariance permits fixing $(0,0)$ in every nonempty candidate
landmark set.  For a fixed set, the checker computes all distances using
\eqref{eq:torus-distance}, sorts each outside-vertex signature, and rejects
the set at the first repeated signature.  Thus exhausting all remaining
combinations gives an exact lower bound.  Every asserted upper bound is
stored as an explicit landmark set and rechecked from scratch.

For the thin factor, the exact values are
\[
\begin{array}{c|rrrrrrrrrrrrrrr}
t&3&4&5&6&7&8&9&10&11&12&13&14&15&16&17\\ \hline
\odim(C_3\square C_t)&8&5&7&5&5&4&5&4&5&3&4&3&4&3&4.
\end{array}
\]

For $4\leq s,t\leq17$, the complete symmetric table is
\begin{table}[ht]
\centering
\setlength{\tabcolsep}{2.8pt}
\scriptsize
\begin{tabular}{c|rrrrrrrrrrrrrr}
$s\backslash t$&4&5&6&7&8&9&10&11&12&13&14&15&16&17\\ \hline
4 &7&5&5&5&5&5&5&5&4&5&4&4&4&4\\
5 &5&6&5&5&5&5&4&5&4&5&4&4&4&4\\
6 &5&5&6&5&5&5&5&5&4&5&4&4&4&4\\
7 &5&5&5&5&5&5&5&5&5&5&5&4&4&5\\
8 &5&5&5&5&5&5&5&5&4&5&4&4&4&4\\
9 &5&5&5&5&5&5&5&5&5&5&5&4&4&5\\
10&5&4&5&5&5&5&5&5&4&5&4&4&4&4\\
11&5&5&5&5&5&5&5&5&5&5&5&4&4&5\\
12&4&4&4&5&4&5&4&5&4&5&4&4&4&4\\
13&5&5&5&5&5&5&5&5&5&5&5&4&4&5\\
14&4&4&4&5&4&5&4&5&4&5&4&4&4&4\\
15&4&4&4&4&4&4&4&4&4&4&4&4&4&4\\
16&4&4&4&4&4&4&4&4&4&4&4&4&4&4\\
17&4&4&4&5&4&5&4&5&4&5&4&4&4&5
\end{tabular}
\caption{Exact values of $\odim(C_s\square C_t)$ for $4\leq s,t\leq17$.}
\label{tab:finite-strip}
\end{table}

For each table entry equal to five, the C++ checker exhausts all
translation-normalized four-sets, and a retained five-set verifies the upper
bound.  The program \path{verify-finite-strip-witnesses.mjs} contains all
54 such five-set certificates.  It also contains optimal witnesses for
\[
  \odim(C_4\square C_4)=7,\qquad
  \odim(C_5\square C_5)=6,\qquad
  \odim(C_6\square C_6)=6,
\]
whose smaller cardinalities were exhausted by the general exact enumerator.
The 48 cells equal to four have explicit witnesses returned by the C++
checker and are bounded below by Theorem~\ref{thm:lower-four}.

The complete finite-strip rerun checks 105 unordered parameter pairs and
agrees with Table~\ref{tab:finite-strip} in every cell.  As the largest
negative four-set search, the $17\times17$ case examines exactly
\[
  \binom{288}{3}=3,939,936
\]
translation-normalized candidates.

\section{Proof of the classification}

\begin{proof}[Proof of Theorem~\ref{thm:main}]
Suppose first that $a=3$.  The fifteen values with $3\leq b\leq17$ are the
exact values in Section~\ref{sec:computation}.  Proposition~\ref{prop:c3-even}
settles all even $b\geq12$.  For odd $b\geq19$,
Proposition~\ref{prop:c3-odd-lower} and the odd construction in
Theorem~\ref{thm:four-constructions} give the exact value four.  The endpoint
$b=15$ is covered by the first endpoint construction, while $b=13,17$ are in
the finite table.

Now suppose that $a\geq4$.  Theorem~\ref{thm:lower-four} gives the universal
lower bound four.  Theorem~\ref{thm:four-constructions} settles
$b\in\{15,16\}\cup\{18,19,\ldots\}$ at value four.  Every remaining pair has
$4\leq a\leq b\leq17$ and is therefore in
Table~\ref{tab:finite-strip}.  Reading its entries yields exactly the five
families in the statement, the three exceptional squares, and value five in
all other cases.
\end{proof}

\section{Reproducibility}

The accompanying source archive contains the following machine-readable
artifacts.
\begin{itemize}
  \item the exact all-parameter affine certificate used in
  Lemma~\ref{lem:cyclic-collisions}; its file name is
  \begin{center}
    \small\texttt{certify-four-family-linear.wl};
  \end{center}
  \item \texttt{torus-k4-exact.cpp}: exhaustive four-set search for the finite
  strip;
  \item \texttt{torus-odim-search.mjs}: cardinality-by-cardinality exact search
  and independent witness verification;
  \item \texttt{verify-finite-strip-witnesses.mjs}: all 57 retained finite
  upper certificates, including the three exceptional squares;
  \item \texttt{analyze-four-template-codes.mjs}: endpoint cyclic collision
  fibers.
\end{itemize}

All graph distances and signatures are computed with integer arithmetic.  The
infinite certificate was run with Wolfram Engine 14.3.0.  Its retained source
SHA-256 is
\begin{center}
\texttt{DBF0964D2CE55B65AE81BDCB31B562C9AA9AE7DF25E0163176143A862F788292}.
\end{center}
The certificate was independently inspected and rerun.  The finite table was
also regenerated from source; all retained witnesses were checked by a
separate JavaScript implementation.

Theorems~\ref{thm:lower-four}, \ref{prop:c3-even}, and
\ref{prop:c3-odd-lower} are paper proofs independent of computation.  The
universal quantifier in Lemma~\ref{lem:cyclic-collisions} relies on exact
linear-integer quantifier elimination, and the entries of
Table~\ref{tab:finite-strip} rely on exhaustive finite enumeration.

The file \texttt{torus-odim-search.mjs} also offers optional
\texttt{randomTrials} and \texttt{hillSteps} modes for locating candidate
upper-bound witnesses.  Every witness used in the paper was checked by a
deterministic full-signature pass.  Lower bounds follow from the analytic
arguments and exhaustive exact enumeration.

\appendix

\section{The Presburger chamber certificate}\label{app:presburger}

We record the linear decomposition underlying
Lemma~\ref{lem:cyclic-collisions}.  Let $n\in\{2m,2m+1\}$, let
$0\leq a,j<n$, and write $d_n(j,a)=\delta_n(j-a)$.  The four branches used
by the certificate are
\[
d_n(j,a)=
\begin{cases}
j-a,&j\geq a\text{ and }j-a\leq m,\\
n-j+a,&j\geq a\text{ and }j-a\geq m+1,\\
a-j,&j\leq a-1\text{ and }a-j\leq m,\\
n-a+j,&j\leq a-1\text{ and }a-j\geq m+1.
\end{cases}
\]
These conditions are mutually exclusive and exhaustive over the integers.
After choosing one branch for each of the four landmarks and a permutation
$\pi$ satisfying
\[
  x_{\pi(1)}\leq x_{\pi(2)}\leq x_{\pi(3)}\leq x_{\pi(4)},
\]
the normalized code is the affine tuple
\[
  (x_{\pi(1)},\ldots,x_{\pi(4)})
  -x_{\pi(1)}(1,1,1,1).
\]
Thus every chamber condition and every equality between two normalized codes
is a formula in linear integer arithmetic.

The procedure \texttt{discoverPieces} starts from a finite set of observed
branch/order types.  If $D$ is the full parameter domain and $P_1,\ldots,P_r$
are the chamber conditions found so far, it asks exactly whether
\[
  D\wedge\neg(P_1\vee\cdots\vee P_r)
\]
has an integer solution.  A returned solution supplies a missing chamber,
which is added before the query is repeated.  When the formula reduces to
\texttt{False}, coverage of the entire infinite domain is proved.  Since
there are only finitely many branch and ordering types, this
counterexample-guided refinement terminates.  The subsequent collision and
noncollision queries are therefore finite families of Presburger formulas.

\bibliographystyle{alpha}
\bibliography{references}

\end{document}